\documentclass[a4paper,11pt]{amsart}

\usepackage{amsmath,amssymb,amsthm,mathtools,tikz,comment,bbm,cprotect}
\usepackage[margin=2.5cm]{geometry}
\usepackage{thm-restate} 
\usepackage{enumerate}
\usepackage[shortlabels]{enumitem}
\usepackage{hyperref}
\usepackage[noadjust]{cite}

\theoremstyle{plain}
\newtheorem{thm}{Theorem}[section]

\newtheorem{lem}[thm]{Lemma}

\theoremstyle{definition}
\newtheorem{defn}[thm]{Definition}

\newtheorem*{remark*}{Remark}
\newtheorem*{thm*}{Theorem}
\newtheorem*{defn*}{Definition}
\newtheorem*{lem*}{Lemma}
\newtheorem*{cor*}{Corollary}

\newcommand{\N}{\mathbb{N}}
\newcommand{\Z}{\mathbb{Z}}

\newcommand{\R}{\mathbb{R}}
\newcommand{\C}{\mathbb{C}}
\newcommand{\F}{\mathbb{F}}
\newcommand{\one}{\mathbbm{1}}

\DeclareMathOperator{\supp}{supp}

\newcommand{\Pset}{\mathcal{P}}
\newcommand{\Pfin}{\Pset_{\textup{fin}}}

\setenumerate{label=\textup{(\roman*)}} 

\title{Invariant means, coset correct means and the Hahn--Banach theorem}
\author{Armando Martino}
\address[A.\ Martino]{Mathematical Sciences, University of Southampton, University Road, Southampton SO17 1BJ, United Kingdom}
\email{A.Martino@soton.ac.uk}
\author{Motiejus Valiunas}
\address[M.\ Valiunas]{Instytut Matematyczny, Uniwersytet Wroc{\l}awski, plac Grunwaldzki 2, 50-384 Wroc{\l}aw, Poland}
\email{motiejus.valiunas@math.uni.wroc.pl}

\begin{document}

\begin{abstract}
    We show that, in ZF set theory, the existence of ``coset correct means'' on all groups is equivalent to the Hahn--Banach Theorem, as well as to the existence of invariant means on abelian groups.
\end{abstract}
\maketitle

\section{Introduction}

There are various ways to measure subsets of groups: Haar measures on compact groups, random walk means on finitely generated groups etc.  Most of such means are finitely additive and admit some sort of invariance under translation.  Notably, one may consider \emph{amenable} groups~$G$, which are those admitting an invariant mean $\mu \colon \Pset(G) \to [0,1]$ as defined below.  Motivated by this, in \cite{MartinoValiunas} we introduce a weakening of such a mean $\mu$, by requiring left invariance only on cosets of subgroups of $G$.

\begin{defn}
    Let $G$ be a group.  A function $\mu \colon \Pset(G) \to [0,1]$ is called an \emph{invariant mean} on $G$ if it satisfies the following properties:
    \begin{enumerate}[(i)]
        \item \label{it:inv-1} $\mu(G) = 1$;
        \item \label{it:inv-add} $\mu(A \sqcup B) = \mu(A) + \mu(B)$ for any disjoint $A,B \subseteq G$;
        \item \label{it:inv-am} $\mu(gA) = \mu(A)$ for any $g \in G$ and $A \subseteq G$.
    \end{enumerate}
    We may weaken \ref{it:inv-am} by requiring it only for subgroups $A \leq G$.  Specifically, a function $\mu \colon \Pset(G) \to [0,1]$ is called a \emph{coset correct mean} (\emph{CCM}) on $G$ if it satisfies \ref{it:inv-1}, \ref{it:inv-add} and
    \begin{enumerate}
        \item[(iii')] \label{it:inv-CCM} $\mu(gH) = \mu(H)$ for any $g \in G$ and any subgroup $H \leq G$.
    \end{enumerate}
\end{defn}

In \cite[Theorem~A.3]{MartinoValiunas}, we use the Hahn--Banach Theorem for real normed vector spaces to construct a CCM on any group.
The aim of this note is to prove the converse of such a statement:
\begin{thm} \label{thm:main}
    The following statements are equivalent in the axiomatic framework of Zermelo--Fraenkel set theory:
    \begin{enumerate}[label=\textup{(\roman*)}]
        \item \label{it:CCM} Every group admits a coset correct mean.
        \item \label{it:CCM-ZX} Every free abelian group admits a coset correct mean.
        \item \label{it:am} Every abelian group admits an invariant mean.
        \item \label{it:HB} \emph{The Hahn--Banach Theorem}: for any normed vector space $(V,\left\|\cdot\right\|)$ over $\F \in \{ \R,\C \}$, any subspace $S \leq V$ and any bounded linear functional $f_S\colon S \to \F$, there exists a bounded linear functional $f\colon V \to \F$ such that $f|_S = f_S$ and $\| f \|_{V^*} = \| f_S \|_{S^*}$.
    \end{enumerate}
\end{thm}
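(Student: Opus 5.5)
We prove the cycle of implications \ref{it:HB}$\Rightarrow$\ref{it:CCM}$\Rightarrow$\ref{it:CCM-ZX}$\Rightarrow$\ref{it:am}$\Rightarrow$\ref{it:HB}.

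The implication \ref{it:HB}$\Rightarrow$\ref{it:CCM} is \cite[Theorem~A.3]{MartinoValiunas}. We will check that its proof uses only the real Hahn--Banach theorem and no further choice. The implication \ref{it:CCM}$\Rightarrow$\ref{it:CCM-ZX} is trivial.

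For \ref{it:am}$\Rightarrow$\ref{it:HB}, we use the known ZF result of Luxemburg and Pincus: Hahn--Banach is equivalent to the existence of finitely additive probability measures extending ``positive'' data, and in particular it follows from the existence of invariant means on abelian groups. Concretely, we plan to follow Pincus's argument, which runs as follows.
\begin{enumerate}
    \item Hahn--Banach follows in ZF from the statement that every Boolean algebra admits a finitely additive probability measure, suitably strengthened to extension along subalgebras.
    \item That statement is derived from the existence of invariant means on abelian groups such as $\bigoplus \Z/2$. One constructs the measure by averaging over the group of symmetric differences acting on a suitable space, so that invariance forces the extension property.
\end{enumerate}
We may also reduce the complex case to the real case via $f(v)=g(v)-ig(iv)$.

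The main work is \ref{it:CCM-ZX}$\Rightarrow$\ref{it:am}. Let $A$ be abelian and write $A=F/K$ with $F=\Z^{(X)}$ free abelian on $X=A$, which needs no choice. Take a CCM $\mu$ on $F$. A CCM on $F$ is not directly invariant on arbitrary subsets, so the plan is to enlarge the group. Consider $F'=\Z^{(X\times\N)}$, or more generally a free abelian group in which an arbitrary subset $S\subseteq A$ can be encoded as a subgroup. For a subset $S$ of a group, we aim to find a homomorphism $\pi\colon F'\to A$ and subgroups $H_S\leq F'$ such that:
\begin{itemize}
    \item $\pi^{-1}(S)$ is a finite union of cosets of subgroups, up to pieces of measure zero; or
    \item translation by $g$ is realised inside a coset structure.
\end{itemize}
A cleaner route is to define $\nu(S)=\mu(\pi^{-1}(S))$. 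This is finitely additive with $\nu(A)=1$. For invariance we need $\mu(\pi^{-1}(S)+t)=\mu(\pi^{-1}(S))$. We will try to obtain this by an averaging trick over finitely many translates, using a F{\o}lner-type argument inside the finitely generated subgroup generated by $t$. Cosets of the cyclic subgroup $\langle t\rangle$, together with CCM-correctness on subgroups $\langle t\rangle+L$, should give invariance up to $1/n$ for every $n$, hence exactly.

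I expect this step to be the main obstacle. A CCM only controls cosets, and converting that control into full translation invariance requires a clever choice of the free abelian group and of the pulled-back sets. The first thing I would try is to build $F'$ with a coordinate for each subset of $A$ and each translation, so that the relevant sets become subgroups. If that fails, I would instead derive the Boolean-algebra measure statement directly from CCMs on free abelian groups, bypassing \ref{it:am}.
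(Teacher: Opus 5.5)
Your cycle has the right shape, and (iv)$\Rightarrow$(i) and (i)$\Rightarrow$(ii) are fine. However, neither non-trivial link you supply is actually proved, and the one you call the main work rests on an invalid inference.

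\textbf{The step (ii)$\Rightarrow$(iii).} Pulling a CCM back along $\pi\colon F\to A$ only yields a CCM on $A$; upgrading a CCM to an invariant mean is the whole problem. A CCM gives no control of $\mu(B+t)$ against $\mu(B)$ for a general subset $B$. F{\o}lner averaging along $\langle t\rangle$ only produces, for each $n$, a \emph{different} functional $\nu_n(S)=\frac1n\sum_{k<n}\mu(\pi^{-1}(S)+kt)$, which is $1/n$-almost invariant under the single element $\pi(t)$. So ``invariant up to $1/n$ for every $n$, hence exactly'' is a non sequitur. To get one exactly invariant mean you need a limiting functional over $n$, and over all of $A$ simultaneously: a Banach-limit-type device on a directed set. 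Producing that in ZF is precisely the non-trivial content here.

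The idea you are missing is Lemma~\ref{lem:CCM=>nu}. In $G=\bigoplus_{x\in X}\Z$ the set $\{f\mid x\notin\supp(f)\}$ is the infinite-index subgroup $\ker\varphi_x$, so any CCM $\mu$ gives it measure $0$. Hence $\nu(S)=\mu(\{f\mid\supp(f)\in S\})$ is a probability charge on $\Pset(\Pfin(X))$ with $\nu(\{F\mid F_0\subseteq F\})=1$ for every finite $F_0$. This $\nu$ is exactly the limit device your averaging lacks. With it your F{\o}lner idea can be completed:
\begin{enumerate}
\item take $X=A\sqcup\N$;
\item let $p_F$ be the uniform probability on the box $\{\sum_{g\in F\cap A}x_g g\mid 0\le x_g<N_F\}$, counted with multiplicity, where $N_F$ is read off from $F\cap\N$;
\item integrate $p_F(S)$ against $\nu$.
\end{enumerate}
Since $|p_F(S+h)-p_F(S)|\le 1/N_F$ whenever $h\in F$, and $\nu$ concentrates on such $F$ with $N_F$ large, the resulting mean is invariant.

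\textbf{The step (iii)$\Rightarrow$(iv).} You give no precise reference for the claim that invariant means on abelian groups yield Hahn--Banach, and your sketch does not supply it. The equivalence actually available (Luxemburg, as in Schechter) is that HB holds if and only if every Boolean algebra admits a probability charge; no extension property is needed. The natural reading of your $\bigoplus\Z/2$ idea fails. Each coordinate kernel there has index $2$, so an invariant mean gives $\{f\mid F_0\subseteq\supp(f)\}$ measure $2^{-|F_0|}$ rather than $1$, and nothing concentrates on large finite sets.

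\textbf{How the paper proceeds.} It proves (ii)$\Rightarrow$(iv) directly, which also covers (iii), since an invariant mean on $\bigoplus_X\Z$ is a CCM. For a Boolean algebra $X$, it integrates against $\nu$ the canonical equal-weight-on-atoms charges $m_{\langle F\rangle}$ of the finite subalgebras $\langle F\rangle$, extended by $0$ off $\langle F\rangle$. Additivity for disjoint $x,y$ holds on the $\nu$-full set $\{F\mid x,y\in F\}$. The paper then closes the loop with (iv)$\Rightarrow$(iii) by the standard $\ell^\infty(G)$ argument. A box-counting estimate gives $\|\one-v\|\ge1$ on the span of the $s_gv-v$, so $t\one+v\mapsto t$ has norm $1$. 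Its Hahn--Banach extension, evaluated on indicator functions, is an invariant mean. Thus the direct step (ii)$\Rightarrow$(iii) that you flagged as the main obstacle is never needed.
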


The implications \ref{it:CCM} $\Rightarrow$ \ref{it:CCM-ZX} and \ref{it:am} $\Rightarrow$ \ref{it:CCM-ZX} are trivial, and we proved \ref{it:HB} $\Rightarrow$ \ref{it:CCM} in \cite[Theorem~A.3]{MartinoValiunas}.  We prove \ref{it:CCM-ZX} $\Rightarrow$ \ref{it:HB} in Section~\ref{sec:CCM=>HB}, and \ref{it:HB} $\Rightarrow$ \ref{it:am} in Section~\ref{sec:HB=>am}.

\subsection*{Acknowledgements}
The second author was partially supported by the National Science Centre (Poland) grants No.\ 2022/47/D/ST1/00779 and 2025/58/E/ST1/00143.

\subsection*{AI declaration}
No form of AI was used in the preparation of this article.

\section{CCMs imply Hahn--Banach}
\label{sec:CCM=>HB}

We start by showing the following result.

\begin{thm} \label{thm:CCM=>HB}
    Suppose that any free abelian group admits a CCM.  Then the Hahn--Banach Theorem holds.
\end{thm}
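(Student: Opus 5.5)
The plan is to go through a classical ZF-equivalent of the Hahn--Banach Theorem rather than to build extensions of functionals by hand. By the Łoś--Ryll-Nardzewski/Luxemburg theorem, in ZF the Hahn--Banach Theorem (in the form \ref{it:HB} of Theorem~\ref{thm:main}) is equivalent to the statement that \emph{every Boolean algebra $B$ admits a finitely additive probability measure}, i.e.\ a map $m\colon B\to[0,1]$ with $m(1)=1$ and $m(a\vee b)=m(a)+m(b)$ whenever $a\wedge b=0$. I will assume this equivalence. The theorem then reduces to producing such an $m$ on an arbitrary Boolean algebra $B$ from a CCM on a suitable free abelian group.

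\textbf{Step 1 (rigidity of CCMs on subgroups).} Let $\mu$ be a CCM on a group $G$ and $H\le G$.
\begin{itemize}
\item[$\circ$] If $[G:H]=n<\infty$, then $G$ is a disjoint union of $n$ cosets of $H$. These all have the same measure by (iii') and total measure $1$ by (i) and (ii), so $\mu(gH)=1/n$.
\item[$\circ$] If $[G:H]=\infty$, then for every $n$ one can pick $n$ distinct cosets of $H$; this needs only finitely many choices, so it is fine in ZF. Their disjoint union has measure at most $1$, so $\mu(H)\le 1/n$ for all $n$, whence $\mu(H)=0$.
\end{itemize}
More generally, $\mu$ restricted to the Boolean algebra generated by cosets of finite-index subgroups is forced to be the ``natural density''. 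Any freedom in $\mu$ therefore lives only on sets that are not finite unions of cosets.

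\textbf{Step 2 (encoding $B$ in a free abelian group).} Take $G=\Z^{(B)}$, the free abelian group on the set $B$; it is definable in ZF with no choices. I want to assign to each $a\in B$ a set $T_a\subseteq G$ with the following properties:
\begin{enumerate}
\item[(a)] $T_1=G$ and $T_{a\vee b}=T_a\sqcup T_b$ up to a set built from cosets of infinite-index subgroups whenever $a\wedge b=0$, the exceptional set being $\mu$-null by Step 1.
\item[(b)] The sets $T_a$ are defined uniformly from the Boolean operations, so that no choice is used.
\end{enumerate}
Once such $T_a$ exist, $m(a):=\mu(T_a)$ is a finitely additive probability measure on $B$.

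My first attempt is to define $T_a$ through sign or ``positivity'' conditions on the coordinates $e_b$ for $b\le a$ versus $b\le\neg a$. The idea is that the relations $e_b=e_{b\wedge c}+e_{b\wedge\neg c}$ generate a subgroup $R$, and membership questions modulo $R$ should reproduce the Boolean operations. The CCM axiom is then used to show that translating by elements of $R$ does not change $\mu$ on the relevant cosets. Concretely, I would split each $T_a$ into finitely many cosets of subgroups of the form $R+\langle e_b: b\le a\rangle$ and verify additivity coset by coset.

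\textbf{Main obstacle.} Step 2 is the crux: finding sets $T_a$ whose measure is not already forced to be $0$ or $1$ by Step 1, yet whose additivity follows only from coset invariance. Invariance on arbitrary sets is not available, since we do not assume a full invariant mean. If the direct encoding fails, the fallback is to bypass Boolean algebras and prove a Hahn--Banach-type extension directly. I would then work in the rational (or real) normed space $V$. I would first reduce to dominated extension of a functional below a sublinear functional $p$ via the standard one-step inequalities. An extension $f$ would be defined as a $\mu$-average of the ``locally admissible values'' $f(x)\in[-p(-x),p(x)]$, parametrised by a free abelian group of finite configurations. Coset correctness would be used exactly to obtain linearity, through invariance under translation by the subgroups generated by the vectors involved.
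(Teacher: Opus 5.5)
\textbf{There is a genuine gap: the encoding in Step~2 is never constructed, and the concrete plan you sketch for it cannot work.} Your reduction to charges on Boolean algebras and your Step~1 (subgroups of infinite index are $\mu$-null) are the right ingredients. The proposal stops, however, exactly where the proof has to begin, and you say yourself that Step~2 is the crux.

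Sets cut out by sign or positivity conditions on coordinates are in general not finite unions of cosets, so (iii') says nothing about their translates by $R$. If instead each $T_a$ is a finite union of cosets, then by your own Step~1 the value $\mu(T_a)$ is forced by inclusion--exclusion and subgroup indices. In that case $m$ would not depend on $\mu$ at all: it would be a canonical charge on an arbitrary Boolean algebra. That is precisely what ZF cannot supply, since ZF does not prove the Hahn--Banach Theorem. So the non-canonical content of $\mu$ must be used on sets that are not finite unions of cosets. Your fallback via dominated extensions is too vague to assess: you do not say what is averaged, nor how coset invariance would yield linearity.

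\textbf{The missing idea is to use $\mu$ not to measure single sets $T_a$, but to average canonical finite approximations.}

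First, in $G=\bigoplus_{b\in B}\Z$, push $\mu$ forward along $f\mapsto\supp(f)$. This gives a charge $\nu(S)=\mu(\{f\in G : \supp(f)\in S\})$ on subsets $S\subseteq\Pfin(B)$.

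Second, the set $\{f : b\notin\supp(f)\}$ is the kernel of the evaluation map $f\mapsto f(b)$, which is an infinite-index subgroup. So Step~1 makes it null, and hence $\nu(\{F : F_0\subseteq F\})=1$ for every finite $F_0\subseteq B$. Thus $\nu$ behaves like a finitely additive limit along the directed set $\Pfin(B)$.

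Third, each finite $F$ generates a finite subalgebra $\langle F\rangle$ with a canonical charge $m_{\langle F\rangle}$ (equal weight on each atom), so no choices are made. Set $m(a)=\int m_{\langle F\rangle}(a)\,\mathrm{d}\nu(F)$, with the integrand taken to be $0$ when $a\notin\langle F\rangle$. The integral of a bounded function against a charge is defined via simple functions, which is ZF-safe.

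Finally, for disjoint $a,b$ the integrand is additive on the $\nu$-conull set $\{F : a,b\in F\}$. Since integration against a charge is additive, $m(a\vee b)=m(a)+m(b)$, and clearly $m(1)=1$. This is the paper's proof.
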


We will use the following restatement of the Hahn--Banach Theorem.  Here, by a \emph{probability charge} on a Boolean algebra $(X,\vee,\wedge,\neg,0,1)$ we mean a function $\nu\colon X \to [0,1]$ such that $\nu(1) = 1$ and $\nu(x \vee y) = \nu(x)+\nu(y)$ whenever $x \wedge y = 0$.  Specifically, if $X = \Pset(S)$ for a set $S$, then a probability charge on $X$ is a function $\nu\colon X \to [0,1]$ such that $\nu(S) = 1$ and $\nu(A \sqcup B) = \nu(A) + \nu(B)$ for any disjoint $A,B \subseteq S$.  We assume that all our Boolean algebras $X$ are \emph{non-trivial}, i.e.\ $0 \neq 1$ in $X$.

\begin{thm}[see the equivalence of (HB7) and (HB13) in \cite{Schechter}]
    The Hahn--Banach Theorem is equivalent (in ZF set theory) to the following statement: every Boolean algebra admits a probability charge. \qed
\end{thm}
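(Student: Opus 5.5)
This statement is quoted from Schechter, and the paper marks it with \qed. It would not be reproved there, so I only sketch how I would prove both directions in ZF. Throughout I use the \emph{sublinear} form of the Hahn--Banach Theorem: if $p\colon V\to\R$ is sublinear and $f_S\le p$ on a subspace $S$, then $f_S$ extends to a linear $f\le p$. I take as standard that this form is ZF-equivalent to the normed statement in Theorem~\ref{thm:main}\ref{it:HB}. For $\F=\C$, I would pass through real parts via $f(v)=\operatorname{Re}f(v)-i\operatorname{Re}f(iv)$.

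\emph{Hahn--Banach $\Rightarrow$ charges.} Let $X$ be a non-trivial Boolean algebra. Let $V$ be the free real vector space on $X$ modulo the relations $[x\vee y]=[x]+[y]$ for disjoint $x,y$.
\begin{itemize}
\item Any finitely many elements of $X$ generate a finite subalgebra, which has atoms. Hence every $v\in V$ can be written as $\sum_i a_i[e_i]$ with the $e_i\neq0$ pairwise disjoint and $\bigvee_i e_i=1$.
\item Set $p(v)=\max_i a_i$. This is independent of the representation, because any two such representations have a common refinement and refining does not change the maximum.
\item The function $p$ is sublinear, and non-triviality of $X$ gives $p(t[1])=t$.
\item Let $S=\R[1]$ and $f_S(t[1])=t$, so $f_S\le p$ on $S$. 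Extend to a linear $f\le p$ and put $\nu(x)=f([x])$.
\end{itemize}
Then $\nu(x)\le p([x])\le1$ and $-\nu(x)\le p(-[x])\le0$, so $\nu(x)\in[0,1]$. Additivity of $\nu$ comes from the defining relations, and $\nu(1)=1$.

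\emph{Charges $\Rightarrow$ Hahn--Banach.} Let $p$ be sublinear on $V$ and $f_S\le p$ on $S$. Let $D$ be the directed set of finite-dimensional subspaces $F\supseteq S$. For each $F$, let $K_F$ be the set of linear $g\colon F\to\R$ with $g|_S=f_S$ and $g\le p$.
\begin{itemize}
\item $K_F\neq\emptyset$ in ZF. One-dimensional extensions are given by an explicit supremum, and finitely many of them need no choice.
\item $K_F$ is compact and convex in the finite-dimensional space $F^*$.
\end{itemize}
Next I would build the limiting charge. Take the quotient of $\Pset(D)$ by the ideal of subsets disjoint from some tail $\{F: F\supseteq F_0\}$. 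This quotient is non-trivial because $D$ is directed. A probability charge on it lifts to a charge $\nu$ on $\Pset(D)$ with $\nu(\text{tail})=1$ for every tail.

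The main obstacle is to pick $g_F\in K_F$ for all $F$ simultaneously without choice. I would take $g_F$ to be the barycenter of $K_F$ with respect to Lebesgue measure on its affine hull. This point is canonical, since barycenters are affine-invariant and so do not depend on a chosen basis, and it lies in $K_F$ by convexity.

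Now define
\[ f(v)=\int_D g_F(v)\,d\nu(F), \]
integrating over the tail of those $F$ with $v\in F$. The integrand is bounded by $\max(p(v),p(-v))$, and integrals of bounded functions against a charge are definable in ZF as limits of simple sums. On a common tail, linearity of each $g_F$ gives linearity of $f$. Likewise $g_F\le p$ gives $f\le p$, and $g_F|_S=f_S$ gives $f|_S=f_S$, which proves the sublinear form of the theorem.
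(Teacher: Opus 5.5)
The paper does not prove this statement; it is quoted from Schechter. Your sketch therefore has to stand on its own. Its second half uses the same averaging device as the paper's proof of Theorem~\ref{thm:CCM=>HB}: a charge giving full measure to tails of a directed set, integrated against canonical finite-stage data (there $m_{\langle F\rangle}$, here barycenters of $K_F$).

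\textbf{The gap.} As written, that half fails. There is no finite-dimensional subspace $F \supseteq S$ when $S$ is infinite-dimensional. So your $D$ is empty in exactly the case that matters, and $K_F$ could not sit in a finite-dimensional $F^*$ anyway. The repair is to let $D$ be the set of all finite-dimensional subspaces $F \le V$, directed by inclusion, and to set $K_F = \{ g \in F^* : g = f_S \text{ on } F \cap S,\ g \le p \text{ on } F \}$.
\begin{enumerate}
\item[(a)] $K_F$ is non-empty in ZF: perform finitely many one-step extensions starting from $f_S|_{F\cap S}$.
\item[(b)] $K_F$ is compact and convex in $F^*$, because $-p(-w) \le g(w) \le p(w)$ for all $w \in F$.
\item[(c)] The rest of your argument then goes through unchanged: for $v \in S$ one has $g_F(v) = f_S(v)$ for every $F \ni v$.
\end{enumerate}
Incidentally, the barycenter is not needed. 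Index instead by pairs $(F,g)$ with $g \in K_F$, with tails defined through the first coordinate. The ideal of sets missing some tail is still proper, because every $K_F$ is non-empty. Integrating $(F,g) \mapsto g(v)$ then lets the charge do the choosing.

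\textbf{Two smaller points in the first direction.} First, well-definedness of $p$ needs more than a common refinement. You must know that $[c_1],\ldots,[c_k]$ are linearly independent in $V$ whenever the $c_i$ are non-zero and pairwise disjoint. This holds in ZF:
\begin{enumerate}
\item[(a)] A linear relation uses only finitely many relators, so it lives over the finite subalgebra $B$ generated by them and the $c_i$.
\item[(b)] For each atom $\alpha$ of $B$, the function on $B$ equal to $1$ on elements above $\alpha$ and $0$ elsewhere is additive, so it kills those relators.
\item[(c)] Since each $c_i$ lies above some atom of $B$, this function extracts each coefficient of the relation, forcing it to be zero.
\end{enumerate}

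Second, invoking the ZF-equivalence of the sublinear and normed forms is unnecessary. It also hides the only non-trivial implication (normed $\Rightarrow$ sublinear), which is typically proved by arguments of exactly this kind. Instead, work with the normed form directly:
\begin{enumerate}
\item[(a)] Equip $V$ with the norm $\|\sum_i a_i[e_i]\| = \max_i |a_i|$. It is well-defined and positive definite by the independence just noted.
\item[(b)] Extend $t[1] \mapsto t$ to a norm-one functional $f$.
\item[(c)] Argue as at the end of the proof of Lemma~\ref{lem:HB=>am}: $|1-\nu(x)| = |f([\neg x])| \le 1$ gives $\nu \ge 0$, and then $\nu(x) = 1-\nu(\neg x) \le 1$.
\end{enumerate}
Your second direction already yields the normed form directly: take $p = \|f_S\|\,\|\cdot\|$, and pass to real parts for $\C$. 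With these changes the proof is self-contained.
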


The idea of the proof of Theorem~\ref{thm:CCM=>HB} is as follows.  By using the fact that every finite Boolean algebra has a canonical mean obtained by assigning equal weight to all its atoms, we can try to define a (finitely additive, non-constant) mean on an arbitrary Boolean algebra $X$ by taking an ``average'' over all the means over its finite subalgebras.  The following result gives a way to measure subsets of the set of finite subsets of $X$, which in turn gives a way to take averages over finite subsets of $X$ by integrating.

Given a set $X$, we write $\Pfin(X)$ for the set of all finite subsets of $X$.

\begin{lem} \label{lem:CCM=>nu}
    Suppose that any free abelian group admits a CCM, and let $X$ be a set.  Then $\Pset(\Pfin(X))$ admits a probability charge $\nu = \nu_X$ satisfying $\nu(\{ F \in \Pfin(X) \mid F_0 \subseteq F \}) = 1$ for all $F_0 \in \Pfin(X)$.
\end{lem}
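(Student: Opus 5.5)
The plan is to apply the hypothesis to the free abelian group on $X$ and push the resulting CCM forward along the ``support'' map. Let $G = \Z^{(X)}$ be the group of finitely supported functions $g\colon X \to \Z$ under pointwise addition. This is a free abelian group with basis the indicator functions of points of $X$, and it exists in ZF without any choice. By assumption there is a CCM $\mu$ on $G$. Define $\phi\colon G \to \Pfin(X)$ by $\phi(g) = \supp(g) = \{x \in X \mid g(x) \neq 0\}$, and set
\[
\nu(\mathcal{A}) = \mu\bigl(\phi^{-1}(\mathcal{A})\bigr) \qquad \text{for } \mathcal{A} \subseteq \Pfin(X).
\]
Since taking preimages preserves disjoint unions and $\phi^{-1}(\Pfin(X)) = G$, the map $\nu$ is a probability charge on $\Pset(\Pfin(X))$. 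This uses only properties (i) and (ii) of $\mu$.

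It remains to prove that $\nu(\{F \mid F_0 \subseteq F\}) = 1$ for each $F_0 \in \Pfin(X)$, and this is where coset correctness enters. The preimage of this family is $\{g \in G \mid g(x) \neq 0 \text{ for all } x \in F_0\}$. Its complement in $G$ is $\bigcup_{x \in F_0} H_x$, where $H_x = \{g \in G \mid g(x) = 0\}$ is the kernel of the evaluation homomorphism at $x$, and hence a subgroup of $G$.

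The key step is to show that $\mu(H_x) = 0$. For $n \in \Z$, the sets $\{g \mid g(x) = n\} = n\delta_x + H_x$ are pairwise disjoint cosets of $H_x$. By (iii') each of them has measure $\mu(H_x)$. For any $k \in \N$, applying finite additivity to $k$ of these cosets gives $k\,\mu(H_x) \le \mu(G) = 1$. Since $k$ is arbitrary, $\mu(H_x) = 0$.

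Finite additivity implies monotonicity and finite subadditivity. Hence $\mu\bigl(\bigcup_{x \in F_0} H_x\bigr) \le \sum_{x \in F_0} \mu(H_x) = 0$, and the complement has measure $1$, as required. When $F_0 = \emptyset$ the statement is just $\nu(\Pfin(X)) = 1$.

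I do not expect a genuine obstacle here. The only point needing care is the choice of group and map so that the ``bad'' sets become finite unions of subgroups of infinite index. The support map on $\Z^{(X)}$ achieves exactly this. Everything else is routine finite additivity, and no choice is used at any stage.
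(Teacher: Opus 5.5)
Your proposal is correct and follows the paper's proof essentially verbatim. It uses the same group $\bigoplus_{x\in X}\Z$, the same pushforward of the CCM along the support map, and the same reduction to $\mu(\ker\varphi_x)=0$ via a finite union of evaluation kernels, additionally spelling out (via the disjoint cosets $n\delta_x + H_x$ forcing $k\,\mu(H_x)\le 1$ for all $k$) the fact that an infinite-index subgroup has measure zero, which the paper leaves implicit.
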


\begin{proof}
    Note that $G = \{ f \in \Z^X \mid \left| \supp(f) \right| < \infty \} \cong \bigoplus_{x \in X} \Z$ is free abelian, and so admits a CCM $\mu$.  Given $S \subseteq \Pfin(X)$, define
    \[
    \nu(S) \coloneqq \mu(\{ f \in G \mid \supp(f) \in S \}).
    \]
    Then $\nu\coloneqq \Pset(\Pfin(X)) \to [0,1]$ satisfies $\nu(\Pfin(X)) = 1$.  Moreover, $\nu$ is finitely additive: if $S,T \subseteq \Pfin(X)$ are disjoint, then so are $\{ f \in G \mid \supp(f) \in S \}$ and $\{ f \in G \mid \supp(f) \in T \}$, and hence $\nu(S \cup T) = \nu(S) + \nu(T)$ by finite additivity of $\mu$.

    Now note that for a given $F_0 \in \Pfin(X)$, the set $\{ F \in \Pfin(X) \mid F_0 \subseteq F \}$ is the complement of $\bigcup_{x \in F_0} \{ F \in \Pfin(X) \mid x \notin F \}$, and so it is enough to show that $\nu(\{ F \in \Pfin(X) \mid x \notin F \}) = 0$ for all $x \in X$. But indeed, we have
    \[
    \nu(\{ F \in \Pfin(X) \mid x \notin F \}) = \mu(\{ f \in G \mid x \notin \supp(f) \}) = \mu(\ker(\varphi_x)) = 0,
    \]
    where $\varphi_x\colon G \to \Z$ is the surjective homomorphism defined by $\varphi_x(f) = f(x)$, and so the last equality follows since $\ker(\varphi_x)$ is an infinite-index subgroup of $G$ and $\mu$ is a CCM.
\end{proof}

The following properties of finitely generated boolean algebras are well-known.  Recall that an \emph{atom} in a Boolean algebra $X$ is a non-zero element $x \in X$ such that $y \neq y \wedge x$ for any $y \in X \setminus \{0,x\}$.

\begin{lem} \label{lem:finite-Boolean}
    Given a Boolean algebra $X$ and a finite subset $F \subseteq X$, the subalgebra $\langle F \rangle$ generated by $F$ is finite and admits a unique probability charge $m_{\langle F \rangle}$ that gives the same weight to each atom.
\end{lem}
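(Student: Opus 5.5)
We have to prove that $\langle F\rangle$ is finite and carries a unique probability charge giving equal weight to each atom. The plan is the standard atom decomposition, done constructively so that no choice is used (everything is indexed by functions $F \to \{0,1\}$). For each $\varepsilon \in \{0,1\}^F$ we set
\[
a_\varepsilon \coloneqq \bigwedge_{x \in F} x^{\varepsilon(x)}, \qquad x^1 = x,\ x^0 = \neg x,
\]
with the empty meet equal to $1$. We then let $E = \{ \varepsilon \mid a_\varepsilon \neq 0 \}$.

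First we check the basic facts about the $a_\varepsilon$.
\begin{enumerate}[(a)]
    \item They are pairwise disjoint: if $\varepsilon \neq \varepsilon'$ differ at $x$, then $a_\varepsilon \wedge a_{\varepsilon'} \le x \wedge \neg x = 0$.
    \item Their join is $1$: distribute $\bigwedge_{x\in F}(x \vee \neg x) = 1$ by induction on $|F|$.
    \item Each $x \in F$ is the join of those $a_\varepsilon$ with $\varepsilon(x)=1$.
\end{enumerate}

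Next we identify $\langle F\rangle$. Let $B$ be the set of joins $\bigvee_{\varepsilon \in T} a_\varepsilon$ for $T \subseteq E$. By (a) and (b), $B$ is closed under $\vee$, $\wedge$ and $\neg$: the complement corresponds to $E \setminus T$ and the meet to $T \cap T'$. Since $B$ contains $F$ by (c) and is obviously contained in $\langle F\rangle$, we get $B = \langle F \rangle$. This shows that $\langle F\rangle$ is finite, of size at most $2^{2^{|F|}}$. Moreover, the map $T \mapsto \bigvee_{T} a_\varepsilon$ is injective on $\Pset(E)$, because meeting with $a_\varepsilon$ recovers whether $\varepsilon \in T$. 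So $\langle F \rangle \cong \Pset(E)$ as Boolean algebras. The atoms are exactly the $a_\varepsilon$ with $\varepsilon \in E$: each is minimal nonzero because any $y \le a_\varepsilon$ in $B$ is either $0$ or $a_\varepsilon$, and every nonzero element of $B$ dominates some $a_\varepsilon$. Since the algebra is non-trivial, $E \neq \emptyset$.

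Now we construct the charge. Define $m_{\langle F \rangle}(\bigvee_T a_\varepsilon) \coloneqq |T|/|E|$. This is well defined by the injectivity above and takes the value $1$ at $1$. If $y \wedge z = 0$, the corresponding index sets are disjoint and the index set of $y \vee z$ is their union, so $m_{\langle F \rangle}$ is additive. For uniqueness, any probability charge that gives equal weight $c$ to every atom satisfies $1 = |E|c$ by additivity over the disjoint decomposition (b). Hence $c = 1/|E|$, and additivity then forces the value $|T|/|E|$ on every element.

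None of these steps is a serious obstacle. The only point needing care is (b), together with checking that $B$ is closed under the operations, which is a finite distributivity induction on $|F|$. We also make sure that no choice principle is invoked: the finiteness of $F$ and all indexing by $\{0,1\}^F$ are explicit, so the argument is valid in ZF.
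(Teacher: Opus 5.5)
Your proof is correct and follows the same route as the paper: identify $\langle F\rangle$ with $\Pset(S)$ for a finite nonempty set $S$, take the normalized counting charge, and get uniqueness from additivity. The only difference is that the paper cites Halmos--Givant for the finiteness of $\langle F\rangle$ and for the representation of finite Boolean algebras as power sets, whereas you prove both directly via the atoms $a_\varepsilon$; this makes the ZF-validity and the need for $E \neq \emptyset$ explicit.
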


\begin{proof}
    $\langle F \rangle$ is finite by \cite[Chapter~11, Corollary~2]{HalmosGivant}.

    For the existence of $m_{\langle F \rangle}$, note every finite Boolean algebra is isomorphic to $\Pset(S)$ for some finite set $S$ \cite[Chapter~15, Corollary~1]{HalmosGivant}, and the function $m_S'\colon \Pset(S) \to [0,1]$ defined by $m_S'(T) \coloneqq \frac{|T|}{|S|}$ is clearly a probability charge.  For uniqueness, note that since the atoms in $\Pset(S)$ are singletons, $m_S'$ gives the same weight (equal to $\frac{1}{|S|}$) to each of them, and by finite additivity it is the unique probability charge satisfying this property.
\end{proof}

Now we define a way of integrating over $Y = \Pfin(X)$ for a Boolean algebra $X$, as follows.

\begin{defn} \label{defn:int}
    Given a set $Y$, a probability charge $\nu$ on $\Pset(Y)$ and a bounded function $f\colon Y \to [0,\infty)$, let
    \[
    \int_Y f(y) \,\mathrm{d}\nu(y) \coloneqq \inf \sum_{i=1}^n a_i \nu(B_i),
    \]
    where the infimum ranges over all $a_1,\ldots,a_n \in [0,\infty)$ and all partitions $Y = B_1 \sqcup \cdots \sqcup B_n$ such that $f(y) \leq a_i$ for all $y \in B_i$.
\end{defn}

The following properties are well-known (see \cite[Chapter~III]{DunfordSchwartz}) and can be proved within ZF set theory.  As a compromise between brevity and self-containment, we give a brief sketch to convince the reader that no form of the Axiom of Choice is necessary here.

\begin{lem} \label{lem:int}
    Let $Y$ be a set and let $\nu$ be a probability charge on $\Pset(Y)$.
    \begin{enumerate}[label=\textup{(\roman*)}]
        \item \label{it:int-sup} For any bounded function $f\colon Y \to [0,\infty)$, the integral $\int_Y f(y) \,\mathrm{d}\nu(y)$ is finite and equal to $\sup \sum_{i=1}^m a_i \nu(C_i)$, where the supremum ranges over all $a_1,\ldots,a_m \in [0,\infty)$ and all partitions $Y = C_1 \sqcup \cdots \sqcup C_m$ such that $f(y) \geq a_i$ for all $y \in C_i$.
        \item \label{it:int-add} For any bounded functions $f,g \colon Y \to [0,\infty)$, we have
        \[
        \int_Y (f+g)(y) \,\mathrm{d}\nu(y) = \int_Y f(y) \,\mathrm{d}\nu(y) + \int_Y g(y) \,\mathrm{d}\nu(y).
        \]
        \item \label{it:int-nu1} If $T \subseteq Y$ satisfies $\nu(T) = 1$, then $\int_Y f(y) \,\mathrm{d}\nu(y) = \int_T f(y) \,\mathrm{d}\nu(y)$ for any bounded function $f\colon Y \to [0,\infty)$.
    \end{enumerate}
\end{lem}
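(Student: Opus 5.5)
The plan is to work entirely with ``simple'' partition data: finite partitions $Y = B_1 \sqcup \cdots \sqcup B_n$ with coefficients. Such data can be refined explicitly in ZF by taking common refinements $B_i \cap C_j$, so no choice is involved. For a bounded $f$ with $0 \le f \le M$, write $U(f)$ for the infimum in Definition~\ref{defn:int} and $L(f)$ for the supremum in part~\ref{it:int-sup}.

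For part~\ref{it:int-sup}, I would first note that $U(f)$ is finite: the trivial partition $\{Y\}$ with $a_1 = M$ gives $U(f) \le M$, and the sums are nonnegative. Next I would show $L(f) \le U(f)$. Given an upper datum $(a_i, B_i)$ and a lower datum $(b_j, C_j)$, pass to the common refinement $\{B_i \cap C_j\}$ and discard empty pieces. On each nonempty piece we have $b_j \le f(y) \le a_i$, so $b_j \le a_i$. Finite additivity then gives
\[
\sum_j b_j \nu(C_j) = \sum_{i,j} b_j \nu(B_i \cap C_j) \le \sum_{i,j} a_i \nu(B_i \cap C_j) = \sum_i a_i \nu(B_i).
\]
For the reverse inequality, fix $\varepsilon > 0$ and $N$ with $M/N < \varepsilon$. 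Set $B_k = f^{-1}([kM/N, (k+1)M/N))$ for $0 \le k < N$, and let $B_N = f^{-1}(\{M\})$. Use $a_k = (k+1)M/N$ as the upper value and $kM/N$ as the lower value on $B_k$. The two resulting sums differ by at most $(M/N) \sum_k \nu(B_k) = M/N < \varepsilon$. Hence $U(f) = L(f)$. This slicing argument is the crux of the whole lemma, and it is explicit.

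For part~\ref{it:int-add}, I would prove the two inequalities separately. Given upper data for $f$ and for $g$, their common refinement, with coefficients $a_i + a'_j$, is upper data for $f+g$, which gives $U(f+g) \le U(f) + U(g)$. The same construction with lower data gives $L(f+g) \ge L(f) + L(g)$. Part~\ref{it:int-sup} then yields equality. I expect this to be routine once part~\ref{it:int-sup} is available, so the main obstacle is really part~\ref{it:int-sup}, namely the equality of the upper and lower integrals.

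For part~\ref{it:int-nu1}, I would interpret $\int_T$ as the integral with respect to the restriction $\nu|_{\Pset(T)}$, which is a probability charge on $\Pset(T)$ because $\nu(T) = 1$. Since $\nu(Y \setminus T) = 0$, every subset of $Y \setminus T$ has measure $0$. Any partition $\{B_i\}$ of $Y$ then satisfies $\nu(B_i) = \nu(B_i \cap T)$, so upper data on $Y$ restrict to upper data on $T$ with the same sum. Conversely, upper data on $T$ extend to upper data on $Y$ by adjoining the piece $Y \setminus T$ with coefficient $M$, which contributes $M \cdot 0 = 0$. Therefore the two infima coincide.
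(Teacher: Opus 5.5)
Your proposal is correct and follows essentially the same route as the paper's sketch: slicing $f$ into level sets $f^{-1}([kM/N,(k+1)M/N))$ so that the upper and lower sums differ by $M/N$, sandwiching $f+g$ between upper and lower data for the additivity, and using that $Y \setminus T$ is null for part (iii). You work with partition data instead of simple functions, and you explicitly check via common refinements that every lower sum is at most every upper sum, a step the paper's sketch leaves implicit.
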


\begin{proof}[Proof (sketch)]
    One may rephrase Definition~\ref{defn:int} to say that $\int_Y f(y) \,\mathrm{d}\nu(y)$ is the infimum of $I(f_1) \coloneqq \sum_{t \in f_1(Y)} t \cdot \nu(f_1^{-1}(t))$ over all simple functions $f_1 \geq f$, where $g\colon Y \to [0,\infty)$ is called a \emph{simple function} if $g(Y)$ is finite.  Similarly, part~\ref{it:int-sup} is equivalent to saying that $\int_Y f(y) \,\mathrm{d}\nu(y)$ is finite and equal to the supremum of $I(f_0)$ over all simple functions $f_0 \leq f$.  
    Part~\ref{it:int-sup} then follows if we can find simple functions $f_0,f_1 \colon Y \to [0,\infty)$ such that $f_0 \leq f \leq f_1$ with $I(f_1)-I(f_0)$ arbitrarily small.  However, this can be done by taking $f_0(y) = \frac{1}{n} \cdot \lfloor n \cdot f(y) \rfloor$ and $f_1(y) = \frac{1}{n} \cdot \lfloor n \cdot f(y) + 1 \rfloor$ for $n \geq 1$ large enough: in that case, taking $B_i = f^{-1}([\frac{i}{n},\frac{i+1}{n}))$ gives a partition $Y = B_0 \sqcup \cdots \sqcup B_m$, where $m = \lfloor n \cdot \sup(f) \rfloor$, such that $f_j(y) = \frac{i+j}{n}$ for $y \in B_i$ and therefore
    \[
    I(f_j) = \sum_{i=1}^m \frac{i+j}{n} \nu(B_i) = \frac{j}{n} + \sum_{i=1}^m \frac{i}{n} \nu(B_i)
    \]
    for $j \in \{0,1\}$, implying that $I(f_1)-I(f_0) = \frac{1}{n}$.

    Part~\ref{it:int-add} follows since if $f_0,f_1,g_0,g_1\colon Y \to [0,\infty)$ are simple functions such that $f_0 \leq f \leq f_1$ and $g_0 \leq g \leq g_1$, then we have $f_0+g_0 \leq f+g \leq f_1+g_1$: finite additivity of $\nu$ ensures that the part~\ref{it:int-add} holds for simple functions, and so the result follows since $\sup(f_1-f_0)$ and $\sup(g_1-g_0)$ can be taken to be arbitrarily small.

    Part~\ref{it:int-nu1} follows since we have
    \[
    \int_Y f(y) \,\mathrm{d}\nu(y) = \int_T f(y) \,\mathrm{d}\nu(y) + \int_{Y \setminus T} f(y) \,\mathrm{d}\nu(y)
    \]
    by finite additivity of $\nu$, and the second term is bounded above by $\sup(f) \cdot \nu(Y \setminus T) = 0$.
\end{proof}

\begin{proof}[Proof of Theorem~\ref{thm:CCM=>HB}]
    Let $X$ be any Boolean algebra, and let $\nu = \nu_X$ be the probability charge given by Lemma~\ref{lem:CCM=>nu}.  Given any $F \in \Pfin(X)$, let $m_{\langle F \rangle}$ be as in Lemma~\ref{lem:finite-Boolean}, and define $\overline{m}_{\langle F \rangle}\colon X \to [0,1]$ by setting $\overline{m}_{\langle F \rangle}(x) = m_{\langle F \rangle}(x)$ if $x \in \langle F \rangle$ and $\overline{m}_{\langle F \rangle}(x) = 0$ otherwise.  Then define $m\colon X \to [0,1]$ by setting
    \[
    m(x) = \int_{\Pfin(X)} \overline{m}_{\langle F \rangle}(x) \,\mathrm{d}\nu(F).
    \]
    We claim that $m$ is a probability charge.

    Note that we have $1 \in \langle F \rangle$ and therefore $\overline{m}_{\langle F \rangle}(1) = m_{\langle F \rangle}(1) = 1$ for all $F \in \Pfin(X)$, implying that $m(1) = 1$.  For finite additivity, let $x,y \in X$ be such that $x \wedge y = 0$.  By Lemma~\ref{lem:CCM=>nu}, we have $\nu(T) = 1$, where $T = \{ F \in \Pfin(X) \mid x,y \in F \}$.  Since $\overline{m}_{\langle F \rangle}(x \vee y) = m_{\langle F \rangle}(x \vee y) = m_{\langle F \rangle}(x) + m_{\langle F \rangle}(y) = \overline{m}_{\langle F \rangle}(x) + \overline{m}_{\langle F \rangle}(y)$ whenever $F \in T$ by the finite additivity of $m_{\langle F \rangle}$, it follows from Lemma~\ref{lem:int} that
    \[
    m(x \vee y) = \int_{T} \overline{m}_{\langle F \rangle}(x \vee y) \,\mathrm{d}\nu(F) = \int_{T} \overline{m}_{\langle F \rangle}(x) \,\mathrm{d}\nu(F) + \int_{T} \overline{m}_{\langle F \rangle}(y) \,\mathrm{d}\nu(F) = m(x) + m(y),
    \]
    as required.
\end{proof}

\section{Hahn--Banach implies invariant means}
\label{sec:HB=>am}

In this section we prove the following result.

\begin{lem} \label{lem:HB=>am}
    Suppose that the Hahn--Banach Theorem holds.  Then any abelian group admits an invariant mean.
\end{lem}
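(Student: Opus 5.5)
We will use the Hahn--Banach Theorem in its sublinear (Banach-limit) form. Let $A$ be an abelian group and $V = \ell^\infty(A,\R)$ with the sup norm. The real normed-space statement \ref{it:HB} implies, in ZF, the usual dominated-extension version: every linear functional on a subspace dominated by a sublinear functional $p$ with $p(v) \le C\|v\|$ extends to all of $V$ and stays dominated by $p$. This equivalence is standard; it is among the equivalences collected in \cite{Schechter}. We will quote it rather than reprove it. If we wanted to avoid it, we could apply the norm version to the quotient seminormed space $V/W$, where $W$ is the closed span of $\{\delta_g f - f\}$.

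The sublinear functional will be
\[
p(f) \coloneqq \inf \Bigl\{ \sup_{a \in A} \frac{1}{n} \sum_{i=1}^n f(a + g_i) \Bigm| n \ge 1,\ g_1,\dots,g_n \in A \Bigr\}.
\]
We need $p$ to be sublinear, with $p(f) \le \|f\|_\infty$ and $p(\lambda f) = \lambda p(f)$ for $\lambda \ge 0$. Positive homogeneity is immediate. Subadditivity is the key step and the one place where commutativity is used. Given tuples $(g_i)_{i \le n}$ for $f$ and $(h_j)_{j \le m}$ for $f'$, we take the $nm$ sums $g_i + h_j$. Then
\[
\frac{1}{nm}\sum_{i,j}(f+f')(a+g_i+h_j) \le \sup_b \frac1n\sum_i f(b+g_i) + \sup_b \frac1m \sum_j f'(b+h_j),
\]
because for fixed $j$ the inner average over $i$ of $f(a+h_j+g_i)$ is bounded by the first supremum, and symmetrically for $f'$ with $a+g_i$ in place of $a$.

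Next we show $p(f - \delta_g f) \le 0$, where $\delta_g f(a) = f(a+g)$. We take $g_i = ig$ for $i = 1,\dots,n$. The average telescopes to $\frac1n\bigl(f(a+g) - f(a+(n+1)g)\bigr)$, which is at most $2\|f\|/n$. We also check $p(1) = 1$ and $p(f) \ge \inf f$.

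Now we extend the zero functional on the zero subspace, or equivalently $\lambda \mapsto \lambda$ on the constants $\R\cdot 1$, which is dominated by $p$ since $p(\lambda) = \lambda$. This gives a linear $\Lambda \le p$ on $V$. Then
\[
-\Lambda(f) = \Lambda(-f) \le p(-f) \le -\inf f,
\]
so $\Lambda(f) \ge \inf f$, and in particular $\Lambda$ is positive with $\Lambda(1) = 1$. Applying $\Lambda \le p$ to both $f - \delta_g f$ and $\delta_g f - f$, the latter also being of telescoping type, gives $\Lambda(\delta_g f) = \Lambda(f)$.

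Finally we set $\mu(B) \coloneqq \Lambda(\one_B)$. Values lie in $[0,1]$ by positivity and $\Lambda(1) = 1$. Additivity follows from linearity. Translation invariance holds because $\one_{gB} = \delta_{-g}\one_B$.

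The main obstacle is ensuring that everything is ZF-valid. This concerns the passage from the norm-preserving extension in \ref{it:HB} to the dominated-extension form. Our construction of $p$ uses no choice; only this one invocation of Hahn--Banach does.
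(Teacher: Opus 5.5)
Your proof is correct, but it follows a different route from the paper: it is the classical Banach-limit argument for abelian groups. You use the dominated-extension form of Hahn--Banach with the sublinear functional $p$. Commutativity enters only in subadditivity, where your product tuples $(g_i+h_j)$ are a two-at-a-time version of the paper's boxes $\{\mathbf{g^x}\mid\mathbf{x}\in X_N\}$. Positivity and invariance of $\Lambda$ then follow from $\Lambda\le p$ and the telescoping bound.

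The paper instead applies (iv) exactly as stated. It shows, by counting over $X_N\subseteq\Z^n$, that $\|\one-v\|\ge 1$ for all $v$ in the span $Y$ of the $s_gv-v$. Hence $t\one+v\mapsto t$ is a norm-one functional on $\R\one+Y$. The paper extends it with norm one and gets positivity from $\|f\|=1$ via $|1-\mu(A)|=|f(1_{A^c})|\le 1$.

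Your version is more conceptual. As written, however, it derives the lemma from (iv) only modulo the ZF-equivalence of the norm-preserving and dominated-extension forms. That equivalence is a genuine theorem; it goes through probability charges on Boolean algebras, the same circle of results the paper quotes in Section~2. The paper's proof needs nothing beyond (iv).

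You can drop that citation, because your own $p$ completes the quotient fallback you mention. For $w$ in the span of the $\delta_gf-f$, subadditivity and telescoping give $p(w)\le 0$. Hence $\|\one-w\|_\infty\ge\sup(\one-w)\ge p(\one-w)\ge p(\one)-p(w)\ge 1$, which is exactly the paper's distance estimate. The paper's norm-one extension and positivity trick then finish the proof.

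A small point: the choice $n=1$, $g_1=0$ gives $p(f)\le\sup f$. That is what you actually use in $p(-f)\le-\inf f$, so state it instead of $p(f)\le\|f\|_\infty$.
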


\begin{proof}
    Let $G$ be an abelian group, and let $V = \ell^\infty(G)$.  Let $Y \leq V$ be the subspace spanned by the elements $p(g,v) \coloneqq s_gv-v$ for $g \in G$ and $v \in V$, where $s_g\colon V \to V$ is the shift map defined by $(s_gv)_h = v_{gh}$ for all $h \in G$.  Let $\one \in V$ be the element defined by $\one_g = 1$ for all $g \in G$.

    We claim that $\| \one - v \| \geq 1$ for all $v \in Y$.  Indeed, suppose for contradiction that $\| \one - v \| = 1-\varepsilon$ for some $v \in Y$ and some $\varepsilon > 0$, and therefore $v_g \geq \varepsilon$ for all $g \in G$.  Since $v \in Y$, we have $v = \sum_{i=1}^n p(g_i,v^{(i)})$ for some $g_1,\ldots,g_n \in G$ and $v^{(1)},\ldots,v^{(n)} \in V$, and therefore
    \[
        v_h = \sum_{i=1}^n p(g_i,v^{(i)})_h = \sum_{i=1}^n v^{(i)}_{g_ih} - v^{(i)}_h \geq \varepsilon \qquad\text{for all } h \in G.
    \]
    Let $D = \max \{ \|v^{(i)}\| \mid 1 \leq i \leq n \}$, and let $N \in \N$ be arbitrary.  We denote by $X_N$ the set $\{ (x_1,\ldots,x_n) \in \Z^n \mid 0 \leq x_i \leq N-1 \}$, and given $\mathbf{x} = (x_1,\ldots,x_n) \in \Z^n$ we write $\mathbf{g^x}$ for the element $g_1^{x_1} \cdots g_n^{x_n}$.  We then have:
    \begin{align*}
        N^n \cdot \varepsilon &\leq \sum_{\mathbf{x} \in X_N} v_{\mathbf{g^x}} = \sum_{\mathbf{x} \in X_N} \sum_{i=1}^n v^{(i)}_{g_i\mathbf{g^x}} - v^{(i)}_{\mathbf{g^x}} = \sum_{i=1}^n \sum_{\substack{0 \leq x_j \leq N-1 \\ \text{for }1 \leq j \leq n}} v^{(i)}_{g_i\mathbf{g^x}} - v^{(i)}_{\mathbf{g^x}} \\
        &= \sum_{i=1}^n \sum_{\substack{0 \leq x_j \leq N-1 \\ \text{for }1 \leq j \leq n,\, j \neq i}} v^{(i)}_{g_1^{x_1} \cdots g_{i-1}^{x_{i-1}} g_i^N g_{i+1}^{x_{i+1}} \cdots g_n^{x_n}} - v^{(i)}_{g_1^{x_1} \cdots g_{i-1}^{x_{i-1}} g_{i+1}^{x_{i+1}} \cdots g_n^{x_n}} \\
        &\leq \sum_{i=1}^n \sum_{\substack{0 \leq x_j \leq N-1 \\ \text{for }1 \leq j \leq n,\, j \neq i}} 2\|v^{(i)}\| \leq n \cdot N^{n-1} \cdot 2D.
    \end{align*}
    But $N \in N$ was arbitrary, so taking any $N \geq \frac{2Dn}{\varepsilon}$ we get a contradiction.  We thus must have $\| \one - v \| \geq 1$ for all $v \in Y$, as claimed.

    Let $S \leq V$ be the subspace spanned by $Y$ and $\one$.  Since $\| \one - v \| \geq 1$ for all $v \in Y$, linearity implies that $\| t\one - v \| \geq |t|$ for all $v \in Y$ and $r \in \R$, and in particular any element of $S$ has a unique expression of the form $t\one+v$ for $t \in \R$ and $v \in Y$.  This shows that the linear functional $f_S\colon S \to \R$ defined by $f_S(t\one+v) = t$ is well-defined and bounded of norm $\leq 1$, and since $|f_S(\one)| = 1 = \|\one\|$, it has norm exactly $1$.

    Now by the Hahn--Banach Theorem, $f_S$ extends to a norm-$1$ linear functional $f\colon V \to \R$.  We define $\mu\colon \Pset(G) \to \R$ by setting $\mu(A) = f(1_A)$, where $1_A \in V$ is defined by $(1_A)_g = 1$ if $g \in A$ and $(1_A)_g = 0$ otherwise.  We claim that $\mu$ is an invariant mean on $G$.  Indeed:
    \begin{enumerate}[(i)]
        \item we have $\mu(G) = f(\one) = f_S(\one) = 1$;
        \item for disjoint $A,B \subseteq G$, we have $1_{A \sqcup B} = 1_A + 1_B$, so $\mu(A \sqcup B) = \mu(A) + \mu(B)$ by linearity of $f$;
        \item given $A \subset G$ and $g \in G$ we have $1_{gA}-1_A = p(g^{-1},1_A) \in Y$ and therefore $f(1_{gA}-1_A) = 0$, implying that $\mu(gA) = \mu(A)$.
    \end{enumerate}
    In order to show that $\mu$ is an invariant mean it is then enough to show that it takes values in $[0,1]$.  But indeed, given $A \subseteq G$ we have $1 - \mu(A) = \mu(G)-\mu(A) = \mu(A^c) = f(1_{A^c})$ and so $|1-\mu(A)| \leq \|1_{A^c}\| = 1$ since $\|f\| = 1$, implying that $\mu(A) \geq 0$; finite additivity then gives $\mu(A) = \mu(G)-\mu(A^c) \leq \mu(G) = 1$.
\end{proof}

\bibliographystyle{amsalpha}
\bibliography{ref}

\end{document}